\newtheorem{theorem}{Theorem}[section]
\theoremstyle{definition}
\newtheorem{definition}[theorem]{Definition}
\newtheorem{example}[theorem]{Example}
\theoremstyle{remark}
\numberwithin{equation}{section}
\begin{document}

\title[HYPERBOLIC DYNAMICS OF DISCRETE DYNAMICAL SYSTEMS ON ... ]{HYPERBOLIC DYNAMICS OF DISCRETE DYNAMICAL SYSTEMS ON PSEUDO-RIEMANNIAN MANIFOLDS }
\author[MohammadReza Molaei]{ {\bf MohammadReza Molaei
}\\ {Mahani Mathematical Research Center\\ Shahid Bahonar University of Kerman, Kerman, Iran\\e-mail: mrmolaei@uk.ac.ir }}
\maketitle

\begin{abstract}
We consider a discrete dynamical system on a pseudo-Riemannian manifold and we determine the concept of a hyperbolic set for it. We insert a condition in the definition of a hyperbolic set which implies to the unique decomposition of a part of tangent space (at each point of this set) to two unstable and stable subspaces with exponentially increasing and exponentially decreasing dynamics on them. We prove the continuity of this decomposition via the metric created by a torsion-free pseudo-Riemannian connection. We present a global attractor for a diffeomorphism on an open submanifold of the hyperbolic space $H^{2}(1)$ which is not a hyperbolic set for it.
\end{abstract}
{\bf AMS Classification:} 37D05, 53B30\\
\section{Introduction}
Hyperbolic dynamics on a Riemannian manifold has a deep history in mathematics and physics \cite{A,H,P}, and it is a main mathematical tool to determine the complex systems behavior \cite{G,R,Y}.   The phrase "{\it hyperbolic dynamics}" has different definitions  in partial differential equations, ordinary differential equations, and discrete dynamical systems \cite{AV,B,C,HA,PA,PT,S}. The notion of hyperbolic dynamics for partial differential equations in a pseudo-Riemannian manifold has been considered by Choquet-Bruhat, and Ruggeri via considering  hyperbolicity of the 3+1 system of Einstein equations \cite{C} in 1983. Here we extend {\it the notion of hyperbolic sets} to discrete dynamical systems on pseudo-Riemannian manifolds. The appearance of a non-trivial hyperbolic set for discrete dynamical systems on  Riemannian manifolds return to Smale's paper in 1998 by introducing  Smale's horseshoe \cite{S}.\\We assume that $M $ is a
 pseudo-Riemannian manifold with the pseudo-Riemannian metric $g(.,.)$. If $p\in M$, then a vector $v\in T_{p}M$ is called spacelike, timelike or null if $g_{p}(v,v)>0$, $g_{p}(v,v)<0$, or $g_{p}(v,v)=0$ respectively. In the next section we present the definition of a hyperbolic set for a discrete dynamical system created by a diffeomorphism on $M$. We see that the derivative of a hyperbolic dynamics affect on two sets of  non-null vectors, and the iteration of it or in it's inverse creates an exponentially growth on the length of vectors. A distribution of null vectors has essential role in this kind of dynamics. In theorem 2.1 we prove the unique decomposition of a part of tangent space at a point of a hyperbolic set to stable and unstable subspaces up to a  special distribution. We prove the continuity of this decomposition in theorem 2.2. In section 3 we present examples of hyperbolic sets and in example 4.1 we find a global attractor which is not hyperbolic.
\section{Hyperbolic set}
In this section we assume that $(M,g)$ is a smooth pseudo-Riemannian manifold, and $f:M\rightarrow M$ is a diffeomorphism.  We also assume that $C $ is a compact invariant set for $f$ i.e. $f^{-1}(C)=C$.
\begin{definition}\label{d1}
We say that $C$ is a hyperbolic set for $f$ up to a distribution $p\mapsto E^{n}(p)$, if there exist positive constants $a$ and $b$ with $b<1$ and a decomposition $$T_{p}M=E^{s}(p)\oplus E^{u}(p)\oplus E^{n}(p)$$
for each $p\in C$ so that:\\
$(i)$ Each non-zero vector in $E^{s}(p)$ or $E^{u}(p)$ is timelike or spacelike, and each vector of $E^{n}(p)$ is a null vector;\\
$(ii)$ $Df_{p}E^{s}(p)=E^{s}(f(p))$ and $Df_{p}E^{u}(p)=E^{u}(f(p))$;\\
$(iii)$ if $v\in E^{s}(p)$ and $n\in N$ then  $|g_{f^{n}(p)}(Df^{n}_{p} (v), Df^{n}_{p} (v))|\leq a b^{n}|g_{p}(v,v)|$ and $$\lim_{n\rightarrow \infty }g_{f^{n}(p)}(Df^{n}_{p} (v),Df^{n}_{p}(w))=0 ~ for~all~w\in T_{p}M ~with~the~following~property$$ $$| g_{f^{n}(p)}(Df^{n}_{p} (w), Df^{n}_{p} (w))|\leq a b^{n}|g_{p}(w,w)|~for ~all~n\in N;$$\\
$(iv)$ if $v\in E^{u}(p)$ and $n\in N$ then $a^{-1}b^{-n}|g_{p}(v,v)|\leq |g_{f^{n}(p)}(Df^{n}_{p} (v), Df^{n}_{p} (v))|.$
\end{definition}
\begin{theorem}\label{t1}
Let $C$ be a hyperbolic set for $f$ up to a distribution $p\mapsto E^{n}(p)$. Then for each $p\in C$, the tangent space of $M$ at $p$ has a unique decomposition with the properties of the former definition.
\end{theorem}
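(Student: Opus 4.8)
The plan is to argue by comparison of two admissible decompositions. Suppose that, besides $T_{p}M=E^{s}(p)\oplus E^{u}(p)\oplus E^{n}(p)$, there is a second splitting $T_{p}M=\hat E^{s}(p)\oplus \hat E^{u}(p)\oplus E^{n}(p)$ sharing the same prescribed null distribution and satisfying $(i)$--$(iv)$ (with possibly different constants); the goal is to show $E^{s}(p)=\hat E^{s}(p)$ and $E^{u}(p)=\hat E^{u}(p)$. Before the main step I would record two structural facts. First, a connectedness argument: on $E^{s}(p)\setminus\{0\}$ the continuous function $v\mapsto g_{p}(v,v)$ never vanishes by $(i)$, so it has constant sign, i.e. $g$ is \emph{definite} on $E^{s}(p)$ and likewise on $E^{u}(p)$; hence $\sqrt{|g(\cdot,\cdot)|}$ is a genuine norm on each of these subspaces and $(iii)$, $(iv)$ give honest exponential contraction and expansion of $g$-length along orbits. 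Second, from $(iv)$ applied at $f^{-n}(p)$ to $Df^{-n}_{p}v$ and invariance $(ii)$, vectors of $E^{u}(p)$ contract under backward iteration, exhibiting the expected $f\leftrightarrow f^{-1}$ symmetry between the stable and unstable roles.

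The core step is a contraction/expansion dichotomy. Take $v\in\hat E^{s}(p)$ and write $v=v^{s}+v^{u}+v^{n}$ in the first splitting; I claim $v^{u}=0$. Set $S_{n}=Df^{n}_{p}v^{s}\in E^{s}(f^{n}p)$, $U_{n}=Df^{n}_{p}v^{u}\in E^{u}(f^{n}p)$ and $N_{n}=Df^{n}_{p}v^{n}$, so that by bilinearity
\[
g_{f^{n}(p)}(Df^{n}_{p}v,Df^{n}_{p}v)=g(S_{n},S_{n})+g(U_{n},U_{n})+g(N_{n},N_{n})+2g(S_{n},U_{n})+2g(S_{n},N_{n})+2g(U_{n},N_{n}).
\]
Since $v\in\hat E^{s}(p)$, the left side tends to $0$; by $(iii)$ the term $g(S_{n},S_{n})\to 0$; and by $(iv)$, if $v^{u}\neq 0$ then $|g(U_{n},U_{n})|\geq a^{-1}b^{-n}|g_{p}(v^{u},v^{u})|\to\infty$ because $v^{u}$ is non-null. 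The strategy is to show that the diverging self-term $g(U_{n},U_{n})$ cannot be absorbed by the remaining terms, forcing a contradiction. The mixed term $g(S_{n},U_{n})$ should be controlled through the bilinear limit clause in $(iii)$ applied to the contracting test directions, and the symmetric argument run for $f^{-1}$. Concluding $v^{u}=0$ gives $\hat E^{s}(p)\subseteq E^{s}(p)\oplus E^{n}(p)$, and by symmetry the reverse inclusion and the analogous statements for the unstable spaces.

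Having matched the spaces modulo $E^{n}(p)$, I would pin down the null components by a second application of the limit clause in $(iii)$: comparing $Df^{n}_{p}v^{n}$ against the family of contracting test vectors and using invariance $(ii)$ to force the residual null parts of the two splittings to agree, thereby upgrading $\hat E^{s}(p)\subseteq E^{s}(p)\oplus E^{n}(p)$ to the genuine equality $\hat E^{s}(p)=E^{s}(p)$, and likewise $\hat E^{u}(p)=E^{u}(p)$. A dimension count, with $\dim E^{s}(p)+\dim E^{u}(p)+\dim E^{n}(p)=\dim M$ and $E^{n}(p)$ fixed, then closes the argument.

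I expect the main obstacle to be precisely the interaction of the cross terms with the null distribution. Because $g$ is indefinite on $T_{p}M$, there is no norm and no triangle inequality to imitate the classical Riemannian uniqueness proof, and since $E^{n}(p)$ is \emph{not} assumed $Df$-invariant, the forward iterates $N_{n}$ of a null vector may develop genuine stable and unstable components whose $g$-pairings with $S_{n}$ and $U_{n}$ need not vanish a priori. All the difficulty is concentrated in showing those cross terms cannot cancel the divergence of $g(U_{n},U_{n})$; this is exactly the role of the bilinear limit condition built into $(iii)$, and the proof must exploit it (likely together with the compactness of $C$ to obtain uniform bounds on the splitting along the orbit) rather than a single norm estimate.
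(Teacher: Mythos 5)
Your proposal correctly isolates the crux but never actually crosses it: at the decisive point you write that the diverging term $g(U_{n},U_{n})$ must not be ``absorbed'' by the remaining terms and that the mixed terms ``should be controlled'' via the limit clause of condition $(iii)$ --- but no such control is available for the three terms involving $N_{n}$. The limit clause in $(iii)$ applies only to pairs $(Df^{n}_{p}v,\,Df^{n}_{p}w)$ in which $v$ lies in a stable subspace and $w$ satisfies the contraction inequality $|g_{f^{n}(p)}(Df^{n}_{p}w,Df^{n}_{p}w)|\leq ab^{n}|g_{p}(w,w)|$ for all $n$; a null vector $v^{n}$ satisfies no such hypothesis, and since $E^{n}$ is not assumed $Df$-invariant its iterates $N_{n}$ are constrained by nothing in Definition \ref{d1}, so $g(N_{n},N_{n})$, $g(S_{n},N_{n})$ and above all $g(U_{n},N_{n})$ are a priori unbounded. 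Your own closing paragraph concedes that ``all the difficulty is concentrated'' exactly there; flagging the difficulty is not resolving it, and the later step --- pinning down the null components ``by a second application of the limit clause'' --- is likewise only named, not performed. As written, the proposal establishes nothing beyond the setup and the (correct, but peripheral) observations that $g$ is definite on $E^{s}(p)$ and $E^{u}(p)$ and that unstable vectors contract backwards.

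The paper's proof sidesteps your six-term expansion entirely: it asserts at the outset that the two decompositions share their non-null part, $E^{s}_{1}(p)\oplus E^{u}_{1}(p)=E^{s}_{2}(p)\oplus E^{u}_{2}(p)$, so that $u\in E^{s}_{1}(p)$ splits as $u=v+w$ with $v\in E^{s}_{2}(p)$, $w\in E^{u}_{2}(p)$ and \emph{no} null component. Then the only cross term is $g_{f^{n}(p)}(Df^{n}_{p}u,Df^{n}_{p}v)$ with both arguments contracting, which the limit clause of $(iii)$ does kill, and the squeeze $a^{-1}b^{-n}|g_{p}(w,w)|\leq |g(Df^{n}_{p}u,Df^{n}_{p}u)|+|g(Df^{n}_{p}v,Df^{n}_{p}v)|+2|g(Df^{n}_{p}u,Df^{n}_{p}v)|\rightarrow 0$ forces $g_{p}(w,w)=0$, hence $w=0$ by $(i)$, giving $E^{s}_{1}(p)\subseteq E^{s}_{2}(p)$ and equality by symmetry. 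It is worth saying that your worry about non-uniqueness of complements is aimed precisely at the step the paper does not justify --- two complements of the same subspace $E^{n}(p)$ need not coincide --- so your diagnosis of where the real work lies is sharper than the paper's own treatment. But to turn your outline into a proof you would have to either justify that equality of non-null parts (e.g.\ show a vector of $\hat{E}^{s}(p)$ can have no null component relative to the first splitting) or supply the missing estimates on $g(N_{n},N_{n})$, $g(S_{n},N_{n})$, $g(U_{n},N_{n})$; the definition as stated gives you no tool for the latter, and your proposal supplies neither.
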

\begin{proof}
Let $p\in C$, and let $T_{p}M=E^{s}_{1}(p)\oplus E^{u}_{1}(p)\oplus E^{n}(p)=E^{s}_{2}(p)\oplus E^{u}_{2}(p)\oplus E^{n}(p)$, where $E^{s}_{i}(.)$, and $E^{s}_{i}(.)$ satisfy the properties of definition \ref{d1}. Then $E^{s}_{1}(p)\oplus E^{u}_{1}(p)=E^{s}_{2}(p)\oplus E^{u}_{2}(p).$
If $u\in E^{s}_{1}(p)$, then $u=v+w$, where $v\in E^{s}_{2}(p)$ and $w\in E^{u}_{2}(p)$. For $n\in N$ we have $$a^{-1}b^{-n}|g_{p}(w,w)|\leq |g_{f^{n}(p)}(Df^{n}_{p}(w), Df^{n}_{p}(w))|$$ $$=|g_{f^{n}(p)}(Df^{n}_{p}(u-v), Df^{n}_{p}(u-v))|$$ $$= |g_{f^{n}(p)}(Df^{n}_{p}(u), Df^{n}_{p}(u))+ g_{f^{n}(p)}(Df^{n}_{p}(v), Df^{n}_{p}(v))- 2g_{f^{n}(p)}(Df^{n}_{p}(u), Df^{n}_{p}(v))|$$ $$\leq |g_{f^{n}(p)}(Df^{n}_{p}(u), Df^{n}_{p}(u))|+ |g_{f^{n}(p)}(Df^{n}_{p}(v), Df^{n}_{p}(v))|+ 2|g_{f^{n}(p)}(Df^{n}_{p}(u), Df^{n}_{p}(v))|.$$
Since the right hand side of the above inequality tends to zero when $n$ tends to infinity, then $|g_{p}(w,w)|=0$. Thus $w\in E^{n}(p)\cap E^{u}_{2}(p)=\{0\}$. Hence $E^{s}_{1}(p)\subseteq E^{s}_{2}(p)$. The similar calculations imply that $E^{s}_{2}(p)\subseteq E^{s}_{1}(p)$, so they are equal, and we have a unique decomposition.
\end{proof}
Now we are going to define a metric on the set of subspaces of tangent space at a point of $M$. For this purpose we use of "parallel translation". Let us recall it. We assume that $\mathfrak{X}(M)$ is the set of smooth vector fields on $M$. A mapping $\nabla : \mathfrak{X}(M)\times \mathfrak{X}(M)\rightarrow \mathfrak{X}(M) $ ($(X,Z)\mapsto \nabla _{X} ^{Z})$) is called an affine connection on $M$ \cite{a1} if it satisfies the following three conditions:\\
(i) $\nabla_{fX+gY}^{Z}=f\nabla _{X}^{Z}+g \nabla _{Y}^{Z}$ for all $f,g\in C^{\infty}(M)$ and $X,Y,Z\in \mathfrak{X}(M)$;\\
(ii)  $\nabla_{X}^{aY+bZ}=a\nabla_{X}^{Y}+b\nabla_{X}^{Z}$ for all $a,b\in R$ and $X,Y,Z\in \mathfrak{X}(M)$;\\
(iii) $\nabla_{X}^{fZ}=X(f)Z+ f\nabla_{X}^{Z}$ for all $f\in C^{\infty}(M)$ and $X,Z\in \mathfrak{X}(M)$.\\
If $p\in M$, then the bilinear map $\nabla _{p}:T_{p}M\times \mathfrak{X}(M)\rightarrow T_{p}M$, $(X_{p},Z)\mapsto (\nabla _{X}^{Z})_{p}$ is called the covariant derivative of $Z$ in the direction of $X_{p}$. If $\alpha :(-\epsilon, \epsilon) \rightarrow M$ is a smooth curve and if $Z$ is a smooth vector field along $\alpha $ , then the covariant derivative of $Z$ along $\alpha $ is denoted by $\frac{DZ}{dt}$ and it is defined by $\frac{DZ}{dt}= \nabla _{\dot{\alpha}(t)}^{Y}$, where $Y\in \mathfrak{X}(M)$ is an extension of $Z$ on $M$. If  $\frac{DZ}{dt}=0$ for a vector field $Z$ along $\alpha$, then $Z$  is called a parallel vector field along $\alpha $. If $v\in T_{p}M$ and $\alpha :(-\epsilon, \epsilon) \rightarrow M$ is a smooth curve passing through $p$ that is $\alpha(0)=p$, then it is proved that there is a unique parallel vector field $Z$ along $\alpha $ with $Z_{p}=v$. The mapping $P_{t}:T_{p}M\rightarrow T_{\alpha(t)}M$, $v\mapsto Z_{\alpha (t)}$ is called a parallel transition. A connection $\nabla $ is called a pseudo-Riemannian connection if the parallel transition along any given curve preserves pseudo-Riemannian metric, and it is proved that each pseudo-Riemannian manifold has a unique torsion-free connection \cite{a1, Mu} that is a pseudo-Riemannian connection with the following property:\\ $\nabla_{X}^{Z}-\nabla_{Z}^{X}=[X,Z]$ for all $X,Z\in \mathfrak{X}(M)$.\\ Now we assume that $\nabla$ is a torsion-free pseudo-Riemannian connection on $M$, $p\in M$, and $\alpha:(-\epsilon, \epsilon) \rightarrow M$ is a smooth curve passing through $p$. If $u\in T_{\alpha(t)}M$ and $E$ is a subspace of $T_{\alpha(s)}M$, where $t,s\in (-\epsilon,\epsilon)$,  then we define $d(u,E)$ by $$d(u,E)=inf\{|g_{\alpha(s)}(P_{s-t}(u)- w,P_{s-t}(u)- w)|~:~w\in E~and~|g_{\alpha(s)}(w,w)|=1\}$$ where $P_{t}$ is the parallel transition corresponding to $\nabla .$
\begin{definition}\label{d2}
For two subspaces $E$ of $T_{\alpha(s)}M$ and $F$ of $T_{\alpha(t)}M$ with $s,t\in (-\epsilon,\epsilon)$ we define $d(E,F)$ by $$d(E,F)= max\{a,b\},~where$$ $$a=max\{d(v,F)~:~v\in E~and ~|g_{\alpha(s)}(v,v)|=1\},~and$$ $$b= max\{d(u,E)~:~u\in F~and ~|g_{\alpha(t)}(u,u)|=1\}.$$
\end{definition}
With the former assumptions we have the next theorem.
\begin{theorem}\label{t2}
If $C$ is a hyperbolic set for $f$ up to a distribution $p\mapsto E^{n}(p)$, and if $p\in C$, $\alpha(t_{n})\in C$ and $t_{n}\rightarrow 0$ when $n\rightarrow \infty$, then $$d(E^{s}(\alpha(t_{n})), E^{s}(p))\rightarrow 0,~when ~n\rightarrow \infty$$
and
$$d(E^{u}(\alpha(t_{n})), E^{u}(p))\rightarrow 0,~when ~n\rightarrow \infty.$$
\end{theorem}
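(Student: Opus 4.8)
The plan is to argue by contradiction, reducing the convergence $d(E^{s}(\alpha(t_{n})),E^{s}(p))\to 0$ to the convergence of the transported subspaces in a Grassmannian and then identifying every subsequential limit with $E^{s}(p)$ by means of the uniqueness already established in Theorem \ref{t1}. Two preliminary observations guide the whole argument. First, condition $(i)$ of Definition \ref{d1} forces $g_{p}$ to be \emph{definite} on each of $E^{s}(p)$ and $E^{u}(p)$: if the restriction had vectors of both signs, or were degenerate, a nonzero null vector would appear, contradicting $(i)$. Consequently the $|g|$-unit spheres inside $E^{s}$ and $E^{u}$ are compact, so the distance of Definition \ref{d2} is taken over compact sets. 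Second, the parallel transition $P_{t}$ associated with the torsion-free pseudo-Riemannian connection is a linear $g$-isometry depending smoothly on $t$ with $P_{0}=\mathrm{id}$; thus $P_{t_{n}}\to\mathrm{id}$ as $t_{n}\to 0$, and transporting everything to the fixed space $T_{p}M$ turns the problem into one about a single vector space.

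First I would transport the bundles to $T_{p}M$, setting $\tilde E^{s}_{n}=P_{-t_{n}}E^{s}(\alpha(t_{n}))$ and $\tilde E^{u}_{n}=P_{-t_{n}}E^{u}(\alpha(t_{n}))$. Passing to a subsequence I may assume $\dim\tilde E^{s}_{n}=k$ and $\dim\tilde E^{u}_{n}=m$ are constant; since the real Grassmannians $\mathrm{Gr}(k,T_{p}M)$ and $\mathrm{Gr}(m,T_{p}M)$ are compact, I may assume $\tilde E^{s}_{n}\to E^{s}_{\infty}$ and $\tilde E^{u}_{n}\to E^{u}_{\infty}$. The core step is to let the hyperbolicity estimates pass to these limits. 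Fixing an iterate $m_{0}$, a vector $v\in E^{s}_{\infty}$ is a limit $v=\lim_{n}v_{n}$ with $v_{n}\in\tilde E^{s}_{n}$, and $\hat v_{n}=P_{t_{n}}v_{n}\in E^{s}(\alpha(t_{n}))$ converges to $v$ because $P_{t_{n}}\to\mathrm{id}$. Applying the estimate in $(iii)$ at the point $\alpha(t_{n})$ and using continuity of $f^{m_{0}}$, of $Df^{m_{0}}$, and of $g$, together with the fact that $P$ preserves $g$, I obtain $|g_{f^{m_{0}}(p)}(Df^{m_{0}}_{p}(v),Df^{m_{0}}_{p}(v))|\le a\,b^{m_{0}}|g_{p}(v,v)|$ for every $m_{0}$; the analogous limit of $(iv)$ shows that every $v\in E^{u}_{\infty}$ satisfies the expansion bound, and the mixed-limit clause of $(iii)$ survives in the same way.

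With the limiting subspaces in hand I would invoke uniqueness. Since the null distribution is prescribed, $E^{n}(\alpha(t_{n}))$ transports to $E^{n}(p)$ (by continuity of $p\mapsto E^{n}(p)$), so $T_{p}M=E^{s}_{\infty}\oplus E^{u}_{\infty}\oplus E^{n}(p)$ after a dimension count. Exactly as in the proof of Theorem \ref{t1} --- decomposing a contracting vector against the \emph{common} distribution $E^{n}(p)$ and killing the stable--unstable cross term through the mixed-limit clause of $(iii)$ --- one checks that this limiting decomposition again satisfies Definition \ref{d1}; uniqueness then gives $E^{s}_{\infty}=E^{s}(p)$ and $E^{u}_{\infty}=E^{u}(p)$. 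Finally, because $g_{p}$ is \emph{non-degenerate} on $E^{s}(p)$, Grassmann convergence $\tilde E^{s}_{n}\to E^{s}(p)$ forces the transported $|g|$-unit spheres to converge to the compact unit sphere of $E^{s}(p)$, so both maxima in Definition \ref{d2} tend to $0$; this contradicts the assumption that $d(E^{s}(\alpha(t_{n})),E^{s}(p))$ stays bounded away from $0$, and the same run with $E^{u}$ finishes the theorem.

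I expect the genuine difficulty to be precisely the step that is trivial in the Riemannian setting: verifying that the Grassmann limits $E^{s}_{\infty},E^{u}_{\infty}$ still obey condition $(i)$, i.e. contain no nonzero null vector. A pseudo-unit sphere is non-compact and a limit of subspaces on which $g$ is definite may acquire a null direction (a spacelike line can tilt to a null line in the limit), so a vector of $E^{s}(\alpha(t_{n}))$ could in principle escape toward the null cone. The way out is to exploit that the contraction and expansion constants $a,b$ are \emph{uniform} on the compact set $C$ and that the distribution $E^{n}$ against which one decomposes is common to all the splittings, which is exactly the mechanism that made the cross terms vanish in Theorem \ref{t1}; controlling the terms involving the null component $E^{n}$ in that expansion is the one place where the argument must be carried out with care rather than quoted.
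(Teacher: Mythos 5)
Your overall architecture --- parallel-transporting the stable and unstable spaces back to $T_{p}M$, extracting convergent subsequences by compactness, letting the estimates $(iii)$/$(iv)$ of Definition \ref{d1} pass to the limit for each fixed iterate, identifying the limit spaces with $E^{s}(p)$ and $E^{u}(p)$, and finally converting subspace convergence into convergence of the metric $d$ of Definition \ref{d2} --- is essentially the paper's, which carries it out with pseudo-orthonormal bases and explicit $\gamma$-estimates rather than the Grassmannian and a contradiction frame. But your proposal has a genuine hole exactly where you yourself flag one: you never prove that a limit vector $v\in E^{s}_{\infty}$ lies in $E^{s}(p)\oplus E^{u}(p)$, i.e.\ has no component in $E^{n}(p)$. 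The mechanism you gesture at (``the Theorem \ref{t1} cross-term argument with uniform constants and the common distribution $E^{n}$'') does not extend as stated: in Theorem \ref{t1} the null summand cancels identically because \emph{both} decompositions share it, so $E^{n}$ never enters that computation; here the limit vector can a priori have a genuine null component $z$, and Definition \ref{d1} gives no control whatsoever over $Df^{n}$ on $E^{n}$ --- condition $(ii)$ does not even require $Df_{p}E^{n}(p)=E^{n}(f(p))$, let alone any growth bound --- so in the expansion of $g_{f^{n}(p)}(Df^{n}_{p}(v-u-z),Df^{n}_{p}(v-u-z))$ the terms involving $z$ are uncontrolled and the estimate $(iv)$ on the $E^{u}$-part cannot be closed. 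No idea for disposing of $z$ is supplied. It is only fair to add that the paper has the same lacuna (it writes ``Clearly $v_{i}\in E^{s}(p)\oplus E^{u}(p)$'' with no justification), and that the paper's compactness claim for $\{v\in T_{p}M:|g_{p}(v,v)|=1\}$ is false in indefinite signature, since that pseudo-sphere is a noncompact hyperboloid; your Grassmannian extraction is sound where the paper's extraction is not, and your observation that condition $(i)$ forces $g$ to be definite on $E^{s}$ and $E^{u}$, making \emph{their} unit spheres compact, genuinely repairs that part.

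A second, more repairable defect is the wholesale appeal to Theorem \ref{t1}. That theorem applies to decompositions satisfying \emph{all} of Definition \ref{d1}, including the invariance $(ii)$ along the orbit, whereas your limit spaces are only known to satisfy the numerical estimates at the single point $p$; pushing them forward by $Df$ gives $(ii)$ by fiat, but then $(iii)$/$(iv)$ at the points $f^{n}(p)$ do not follow with the same constants $a,b$. The paper sidesteps this by never invoking Theorem \ref{t1}: it re-runs the cross-term computation directly at $p$ for each transported basis vector and then uses the dimension count $\dim E^{s}(p)\geq k$, $\dim E^{u}(p)\geq m-d-k$ to force equality. You should restructure your identification step the same way --- only the estimates at $p$ are needed once the null-component issue is resolved, and that issue remains the real gap, on your side and the paper's alike.
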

\begin{proof} Let $m$ be the dimension of $M$. Then $0\leq dim(E^{s}(\alpha(t_{n}))) \leq m$ for all $n\in N$. Thus there is a subsequence $\{t_{l}~:~l\in N\}$ of $\{t_{n}\}$ and a constant $k\in N$ such that $dim(E^{s}(\alpha(t_{l}))=k$ for all $l\in N$. Let $\{v_{l1}, v_{l2},..., v_{lk}\}$ be a pseudo-orthonormal basis of  $E^{s}(\alpha(t_{l}))$, i.e., $|g_{\alpha(t_{l})}(v_{li}, v_{lj})|=\delta _{ij}$. For a given $i$, the set $\{P_{-t_{l}}(v_{li})~:~ l\in N\}$ is a subset of the compact set  $\{v\in T_{p}M~:~|g_{p}(v,v)|=1\}$. Thus it has a convergent subsequence. We denote it and it's limit  by $\{P_{-t_{r}}(v_{ri})~:~r\in N\}$, and $v_{i}$. Clearly $v_{i}\in E^{s}(p)\oplus E^{u}(p)$, so $v_{i}=u+w$ with $u\in E^{s}(p)$ and $w\in E^{u}(p)$. We have $$a^{-1}b^{-n}|g_{p}(w,w)|\leq |g_{f^{n}(p)}(Df^{n}_{p} (w), Df^{n}_{p} (w))|$$ $$=|g_{f^{n}(p)}(Df^{n}_{p} (w+u-u), Df^{n}_{p} (w+u-u))|$$
 $$\leq |g_{f^{n}(p)}(Df^{n}_{p} (v_{i}), Df^{n}_{p} (v_{i}))| + |g_{f^{n}(p)}(Df^{n}_{p} (u), Df^{n}_{p} (u))|+2|g_{f^{n}(p)}(Df^{n}_{p} (v_{i}),Df^{n}_{p} (u))|$$ $$=\lim_{t_{r}\rightarrow 0} |g_{f^{n}(p)}(Df^{n}_{p} (v_{ri}), Df^{n}_{p} (v_{ri})) |+ |g_{f^{n}(p)}(Df^{n}_{p} (u), Df^{n}_{p} (u))| $$ $$+2\lim_{t_{r}\rightarrow 0} |g_{f^{n}(p)}(Df^{n}_{p} (v_{ri}),Df^{n}_{p} (P_{t_{r}}u))|$$ $$\leq(\lim_{t_{r}\rightarrow 0}a b^{n}g_{p}(v_{ri},v_{ri}))+ a b^{n}g_{p}(u,u)+2\lim_{t_{r}\rightarrow 0} |g_{f^{n}(p)}(Df^{n}_{p} (v_{ri}),Df^{n}_{p} (P_{t_{r}}u))|$$ $$=a b^{n}g_{p}(v_{i},v_{i})+a b^{n}g_{p}(u,u)+2\lim_{t_{r}\rightarrow 0} |g_{f^{n}(p)}(Df^{n}_{p} (v_{ri}),Df^{n}_{p} (P_{t_{r}}u))|.$$ Since $\lim_{n\rightarrow \infty } |g_{f^{n}(p)}(Df^{n}_{p} (v_{ri}),Df^{n}_{p} (P_{t_{r}}u))|=0$,  then the former inequality is valid only in the case $|g_{p}(w,w)|=0$. Thus $w=0$, and $v_{i}\in E^{s}(p)$. The set $\{v_{1}, v_{2},...,v_{k}\}$ is an pseudo-orthonormal subset of $E^{s}(p)$. Thus $dim(E^{s}(p))\geq k$. The similar calculations imply that $dim(E^{u}(p))\geq m-d-k$, where $d$ is the dimension of $E^{n}(p)$. Thus $dim(E^{s}(p))=k$ and $dim(E^{u}(p))=m-d-k$. This implies that for sufficiently large $n$,  $dim(E^{s}(\alpha(t_{n})))=k$ and $dim(E^{u}(\alpha(t_{n})))=m-d-k$.\\
 Now let $\gamma >0$ be given. Since $v_{li}\rightarrow v_{i}$ then  there is $M_{i}>0$ such that for all $l>M_{i}$ $$1-\frac{\gamma}{4}<|g_{p}(P_{-t_{l}}(v_{li}), v_{i})|<1+\frac{\gamma}{4}~  and ~|g_{p}(P_{-t_{l}}(v_{li}), v_{j})|<\frac{\gamma}{4k^{2}}~ for~ all ~j\neq i.$$  If $u_{l}=\displaystyle \Sigma_{i=1}^{k}\beta_{i} v_{li}$ and $|g_{\alpha(t_{l})}(u_{l},u_{l})|=1$, then $$d(u_{l}, E^{s}(p))\leq |g_{p}(P_{-t_{l}}(u_{l})- \displaystyle \Sigma_{i=1}^{k}\beta'_{i} v_{i},P_{-t_{l}}(u_{l})- \displaystyle \Sigma_{i=1}^{k}\beta'_{i} v_{i})|$$
 $$=|g_{p}(\displaystyle \Sigma_{i=1}^{k}\beta_{i}P_{-t_{l}}( v_{li})- \displaystyle \Sigma_{i=1}^{k}\beta'_{i} v_{i},\displaystyle \Sigma_{i=1}^{k}\beta_{i}P_{-t_{l}}( v_{li})- \displaystyle \Sigma_{i=1}^{k}\beta'_{i} v_{i})|$$
 $$= |\displaystyle\Sigma_{i=1}^{k}\Sigma_{j=1}^{k}\beta_{i}\beta'_{j}g_{p}(P_{-t_{l}}( v_{li})-  v_{i},P_{-t_{l}}( v_{lj})-  v_{j})|$$
 $$=2 |\displaystyle\Sigma_{i=1}^{k}(\beta_{i}\beta'_{i})g_{p}(P_{-t_{l}}( v_{li}),  v_{i})|+2 |\displaystyle\Sigma_{i=1}^{k}\displaystyle\Sigma_{i\neq j\& j=1}^{k}(\beta_{i}\beta'_{j})g_{p}(P_{-t_{l}}( v_{li}),  v_{j})|$$ $$ +|\displaystyle\Sigma_{i=1}^{k}(\beta_{i}\beta'_{i})g_{p}(v_{i},  v_{i})|+|\displaystyle\Sigma_{i=1}^{k}(\beta_{i}\beta'_{i})g_{p}(P_{-t_{l}}(v_{li}),  P_{-t_{l}}(v_{li}))|.$$ $$\leq  2|\displaystyle\Sigma_{i=1}^{k}(\beta_{i}\beta'_{i})g_{p}(v_{i},  v_{i})|+ 2 |\displaystyle\Sigma_{i=1}^{k}\displaystyle\Sigma_{i\neq j\& j=1}^{k}(\beta_{i}\beta'_{j})g_{p}(P_{-t_{l}}( v_{li}),  v_{j})|$$ $$ +|\displaystyle\Sigma_{i=1}^{k}(\beta_{i}\beta'_{i})g_{p}(v_{i},  v_{i})|+|\displaystyle\Sigma_{i=1}^{k}(\beta_{i}\beta'_{i})g_{p}(P_{-t_{l}}(v_{li}),  P_{-t_{l}}(v_{li}))|.$$
 If we take $\beta'_{i}=\frac{\gamma\beta_{i}}{4(1+(\Sigma_{i=1}^{k}\beta_{i})^{2})}$, then
  $$d(u_{l}, E^{s}(p))< \frac{\gamma}{4}+\frac{\gamma}{4}+\frac{\gamma}{4}+\frac{\gamma}{4}=\gamma .~(1)$$
  Let $v=\displaystyle\Sigma_{i=1}^{k}\beta_{i}v_{i}\in E^{s}(p)$ and $|g_{p}(v,v)|=1$.  Since $v_{ni}\rightarrow v_{i}$ then there is $L\in N$ such that for all $n>L$
 $$ 1-\frac{\gamma}{4}<|g_{p}(P_{t_{n}}(v_{i}), v_{ni})|<1+\frac{\gamma}{4}~  and ~|g_{p}(P_{t_{n}}(v_{i}), v_{nj})|<\frac{\gamma}{4k^{2}}~ for~ all ~j\neq i.$$
  If $n>L$ then we have $$d(v,E^{s}(\alpha (t_{n})))\leq |g_{\alpha(t_{n})}(P_{t_{n}}(v)-\displaystyle\Sigma_{i=1}^{k}\beta'_{i}v_{ni}, P_{t_{n}}(v)-\displaystyle\Sigma_{i=1}^{k}\beta'_{i}v_{ni})|$$
  $$=|g_{\alpha(t_{n})}(\displaystyle\Sigma_{i=1}^{k}\beta_{i}P_{t_{n}}(v_{i})-\displaystyle\Sigma_{i=1}^{k}\beta'_{i}v_{ni}, \displaystyle\Sigma_{i=1}^{k}\beta_{i}P_{t_{n}}(v_{i})-\displaystyle\Sigma_{i=1}^{k}\beta'_{i}v_{ni})|$$
  $$=|\displaystyle\Sigma_{i=1}^{k}\displaystyle\Sigma_{j=1}^{k}\beta_{i}\beta'_{j}g_{\alpha(t_{n})}(P_{t_{n}}(v_{i})-v_{ni}, P_{t_{n}}(v_{j})-v_{nj})|$$
  $$\leq |\displaystyle\Sigma_{i=1}^{k}2(\beta_{i}\beta'_{i})g_{\alpha(t_{n})}(P_{t_{n}}(v_{i}), v_{ni})|+2 |\displaystyle\Sigma_{i=1}^{k}\displaystyle\Sigma_{i\neq j\& j=1}^{k}(\beta_{i}\beta'_{j})g_{p}(P_{t_{n}}( v_{i}),  v_{nj})|$$ $$+|\displaystyle\Sigma_{i=1}^{k}(\beta_{i}\beta'_{i})g_{\alpha(t_{n})}(v_{ni}, v_{ni})|+|\displaystyle\Sigma_{i=1}^{k}(\beta_{i}\beta'_{i})g_{\alpha(t_{n})}(P_{t_{n}}(v_{i}), P_{t_{n}}(v_{i}))|.$$ If we take $\beta'_{i}=\frac{\gamma\beta_{i}}{4(1+(\Sigma_{i=1}^{k}\beta_{i})^{2})}$, then $$d(v,E^{s}(\alpha (t_{n})))\leq \gamma.~(2)$$
  Inequalities $(1)$ and $(2)$ imply that $$d(E^{s}(\alpha(t_{n})), E^{s}(p))\rightarrow 0,~when ~n\rightarrow \infty.$$
   By the same method we can deduce that $$d(E^{u}(\alpha(t_{n})), E^{u}(p))\rightarrow 0,~when ~n\rightarrow \infty.$$
\end{proof}
\section{Examples}
We consider the  metric  $g_{p}(U,V)=u^{1}v^{1}+u^{2}v^{2}-u^{3}v^{3}-u^{4}v^{4}$ on $R^{4}$, where $p\in R^{4}$ and $U=(u^{1},u^{2},u^{3}, u^{4}),~ V=(v^{1},v^{2},v^{3},v^{4})\in T_{p}R^{4}$.
\begin{example}\label{e00}
If we define $f:R^{4}\rightarrow R^{4}$ by $f(x,y,z,t)=(\frac{x}{2}, \frac{y}{3}, z, 4t)$ then $C=\{(0,0,0,0)\}$ is a hyperbolic set for $f$ up to the distribution $p\mapsto E^{n}(p)=\{(a,a,a,a)~:~a\in R\}$, but $C$ is not a hyperbolic set  for $f$ up to the distribution $p\mapsto E^{n}(p)=\{(a,a,0,a\sqrt{2})~:~a\in R\}.$
\end{example}
The Lorentz or Minkowski metric on $R^{3}$ is defined by $g_{p}(U,V)=u^{1}v^{1}+u^{2}v^{2}-u^{3}v^{3}$ where $p\in R^{3}$ and $U=(u^{1},u^{2},u^{3}),~ V=(v^{1},v^{2},v^{3})\in T_{p}R^{3}$.
 \begin{example}\label{e0}
  Let $\Lambda$ be the hyperbolic set of the Smale horseshoe $h:R^{2}\rightarrow R^{2}$ \cite{S}. If we define $\tilde{h}:R^{3}\rightarrow R^{3}$ by $\tilde{h}(x,y,z)=(h(x,y),4z)$, then $\Lambda \times R$ is a hyperbolic set for  $\tilde{h}$ up to the distribution $p\mapsto E^{n}(p)=\{(a,a,a\sqrt{2})~:~a\in R\}$, where the metric is the Lorentz metric.
 \end{example}
 The manifold $H^{2}(1)=\{(x,y,z)\in R^{3}~|~x^{2}+y^{2}-z^{2}=-1~and~z>0\}$ with the induced Minkowski metric is a Riemannian manifold which is called a hyperbolic space (see figure 1).
\begin{figure}[]
\centering
\includegraphics[width=8cm, height=6cm]{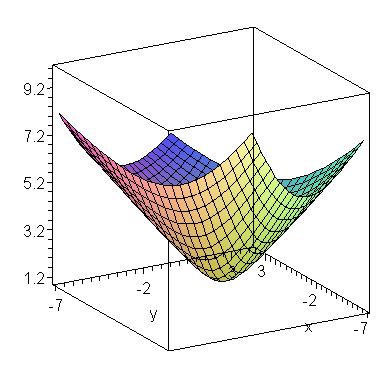}
\caption{The hyperbolic space $H^{2}(1)$.}
\label{}
\end{figure}
\begin{example}\label{e1}
The mapping $f:H^{2}(1)\rightarrow H^{2}(1)$ defined by $$f(x,y,z)=(\frac{x}{2}, \frac{y}{2}, \sqrt{\frac{z^{2}}{4}+ \frac{3}{4}} )$$ is a smooth diffeomorphism. The set $C=\{(0,0,1)\}$ is a hyperbolic set for $f$. Because if $p=(0,0,1)$ and $V=(u,v,w)\in T_{p}H^{2}(1)$, then there is a smooth curve $\beta :(-\epsilon,\epsilon)\rightarrow H^{2}(1)$ such that $\beta(0)=p$ and $\dot{\beta}(0)=V$. We have $(\beta_{1}(t))^{2}+(\beta_{2}(t))^{2}-(\beta_{3}(t))^{2}=-1$. Thus $\beta_{1}(t)\dot{\beta}_{1}(t)+\beta_{2}(t)\dot{\beta}_{2}(t)-\beta_{3}(t)\dot{\beta}_{3}(t)=0$. Hence $0u+0v-w=0$, so $w=0$. By computing the derivative of $f^{n}o\beta$ at zero we find $D_{p}f^{n}(V)=(\frac{u}{2^{n}},\frac{v}{2^{n}},0)$, where $n\in N$. Hence $$g_{p}(D_{p}f^{n}(V), D_{p}f^{n}(V))=\frac{u^{2}}{2^{2n}}+\frac{v^{2}}{2^{2n}}= \frac{1}{2^{2n}}g_{p}(V,V).$$
If we take $E^{s}(p)=T_{p}H^{2}(1)$, $E^{u}(p)=E^{n}(p)=\{(0,0,0)\}$, $a=1$ and $b=\frac{1}{4}$ then we have the conditions of definition \ref{d1}.
\end{example}
In the former example $\{(0,0,1)\}$ is a global attractor for $f$ i.e, the $\omega$-limits of all the points of the space is $\{(0,0,1)\}$. In the next example we present a global attractor which is not a hyperbolic set.
\begin{example}\label{e2}
We define $h:M=H^{2}(1)-\{(0,0,1)\}\rightarrow M$ by $$h(x,y,z)=((\sqrt{\frac{z^{2}+2\sqrt{2}z-2}{4z^{2}-4}}x, \sqrt{\frac{z^{2}+2\sqrt{2}z-2}{4z^{2}-4}}y, \frac{z+\sqrt{2}}{2}).$$
$C=\{(x,y,\sqrt{2})~|~x^{2}+y^{2}=1\} $ is a compact invariant set for $h$ and it is a global attractor for it, but it is not a hyperbolic set for $h$ (see figure 2). In fact if $p=(x,y,\sqrt{2})\in C$, and $V=(u,v,w)\in T_{p}M$, then $xu-yv-\sqrt{2}w=0$, Thus $V=(u,v, \frac{\sqrt{2}}{2}xu+ \frac{\sqrt{2}}{2}yv)$.  We have $$D_{p}h(V)=(-\frac{ux^{2}}{2}-\frac{vyx}{2}+u, -\frac{uxy}{2}-\frac{vy^{2}}{2}+v, \frac{\sqrt{2}}{4}(ux+vy)).$$
 Hence $g_{p}(D_{p}h(V), D_{p}h(V))=g_{p}(V,V)-\frac{5}{16}(xu+yv)^{2}$. Thus the derivative of $h$ is a Riemannian metric preserving on the set of vectors of the form $(-\frac{y}{x}v,v,0)$ when $x\neq 0$ and $(u, -\frac{x}{y}u,0)$ when $y\neq 0$, and it is a decreasing map (not exponentially) on the rest of tangent space at $p$. This implies that $C$ is not a hyperbolic set for $h$.
\begin{figure}[]
\centering
\includegraphics[width=8cm, height=6cm]{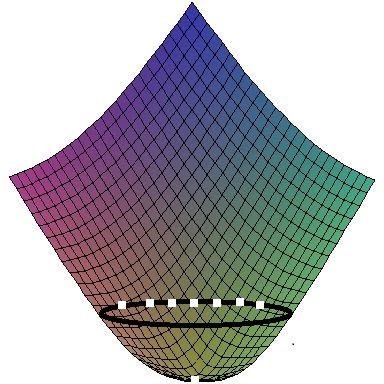}
\caption{The black circle $C$ is a global attractor for $h$ but it is not a hyperbolic set for it.}
\label{}
\end{figure}
\end{example}
\section{Conclusion}
We present a definition for hyperbolic dynamics on a pseudo-Riemannian manifold. We prove the uniqueness of stable and unstable subspaces up to a distribution, and we prove the continuity of this decomposition. By an example we present a global attractor which is not hyperbolic.

\end{document}